\newtheorem{theorem}{Theorem}[section]
\newtheorem{lemma}[theorem]{Lemma}
\newtheorem{proposition}[theorem]{Proposition}
\newtheorem{definition-proposition}[theorem]{Definition-Proposition}
\theoremstyle{definition}
\newtheorem{definition}[theorem]{Definition}
\newtheorem{example}[theorem]{Example}
\theoremstyle{remark}
\newtheorem{remark}[theorem]{Remark}
\numberwithin{equation}{section}
\newcommand{\cO}{\mathcal{O}}
\begin{document}

\title{Supergrassmannians as Homogeneous Superspaces\footnote{2010 Mathematics Subject Classification. Primary 58A50; Secondary 20N99. }}

\author[Mohammadi M.]{Mohammad Mohammadi}
\address{Institute for Advanced Studies in Basic Sciences (IASBS)\\Gavazank, Zanjan, Iran}
\curraddr{}
\email{moh.mohamady@iasbs.ac.ir}
\thanks{}

\author[Varsaie S.]{Saad Varsaie}
\address{
Institute for Advanced Studies in Basic Sciences (IASBS)\\Gavazank, Zanjan, Iran}
\curraddr{}
\email{varsaie@iasbs.ac.ir}
\thanks{}

\date{}

\begin{abstract}
A homogeneous space is a manifold on which a Lie group acts transitively. Super generalization of this concept is also studied in \cite{g-spaces} and \cite{carmelibook}. In this paper we explicitly show that super Lie group $GL(m|n)$ acts transitively on supergrassmannian $G_{k|l}(m|n)$. In this regard, by using functor of point approach, this action is constructed by gluing local actions.\\
\textbf{Keywords:} Super Lie group, supergrassmannian, homogeneous super space.
\end{abstract}

\maketitle

\section*{Introduction}
In common geometry, a homogeneous space defined as the quotient space of a Lie group by an its closed subgroup. These spaces are important for pure mathematical studies as well as for their applications to physics. See \cite{zeghib} and \cite{arkani}. It is shown that a manifold is a homogeneous space if a Lie group acts on it transitively \cite{Greub2}.

\medskip

Extension of these spaces in supergeometry is also studied. For example in \cite{carmelibook}, \cite{g-spaces} and \cite{qoutiontsuper}, the concepts of homogeneous superspaces, super Lie groups and their actions on supermanifolds  are introduced by sheaf theoretic approach. In this paper, we show that the super Lie group $GL(m|n)$, c.f. section \ref{prelim}, acts transitively on supergrassmannian $G_{k|l}(m|n)$ , c.f. section \ref{supergrass}. Thus it is a non trivial example of Balduzzi et al's homogeneous superspaces. Regarding homogeneity of $G_{k|l}(m|n)$, there exists other attempts which have been done earlier. But these works limited to study of special cases or the infinitesimal actions of the groups (see \cite{valenzuela}).

\medskip

In the first section, we recall briefly all necessary basic concepts from \cite{carmelibook} and \cite{g-spaces}. For clarifying the concepts or ideas, whenever it is needed, we give some examples. 

\medskip

In section 2, we study supergrassmannians  extensively. Although this concept is introduced by Manin in \cite{ma1}, but here by developing an efficient formalism, we give a precise proof for existence of these supermanifolds.

\medskip

In section 3, by a functor of points approach, an action of super Lie group $GL(m|n)$ on supergrassmannian
$G_{k|l}(m|n)$ is defined by gluing local actions. Finally it is shown that this action is transitive.

\section{Preliminaries} \label{prelim}
In this section we introduce the basic definitions and results concerning category theory, super Lie groups and action of a super Lie group on a supermanifold. For further detail see \cite{carmelibook}.

\medskip

A supermanifold of dimension $p|q$ is a pair $(|M|,\cO_M)$, where $|M|$ is a  second countable and Hausdorff topological space, and $\cO_M$ is a sheaf of $\mathbb Z_2$-graded algebras, locally isomorphic to $C^\infty(\mathbb R^p)\otimes \wedge \mathbb R^q$. A morphism between two supermanifolds $M=(|M|,\cO_M)$ and $N=(|N|,\cO_N)$ is a continuous map $|\psi|:|M|\rightarrow |N|$ together with a sheaf morphism $\psi^*:\cO_N\rightarrow \cO_M$ called pullback.

\medskip

For open subset $U\subset|M|$, by $J_M(U)$ we mean the set of nilpotent elements in $\cO_M(U)$. It is clear that this set is an ideal in $\cO_M(U)$. So one has the quotient sheaf $\frac{\cO_M}{J_M}$, and locally there exists a canonical isomorphism from this sheaf to the sheaf $C^{\infty}$ under $s+J\mapsto \tilde{s}$. Thus $\tilde M:=(|M|,\frac{\cO_M}{J_M})$ is a classical manifold that is called \textit{reduced manifold} associated to $M$. One can associate a reduced map $\widetilde\psi:\widetilde M\rightarrow \widetilde N$ to each morphism $\psi:M\rightarrow N$. By evaluation of $s$ at $x\in U$, denoted by $ev_x(s)$,
we mean $\tilde{s}(x)$. 

\medskip

By locally small category, we mean a category such that the collection of all morphisms between any two of its objects is a set. Let $X$, $Y$ are objects in a category and $\alpha,\beta:X\rightarrow Y$ are morphisms between these objects. An universal pair $(E,\epsilon)$ is called equalizer if the following diagram commutes:
$$E\xrightarrow{\epsilon} X\overset{\alpha}{\underset{\beta}{\rightrightarrows}}Y$$
i.e., $\alpha \circ \epsilon=\beta \circ \epsilon$ and also for each object $T$ and any morphism $\tau:T\rightarrow X$ which satisfy $\alpha \circ \tau=\beta \circ \tau$, there exists unique morphism $\sigma:T\rightarrow E$ such that $\epsilon \circ \sigma = \tau$. If equalizer existed then it is unique up to isomorphism. For example, in the category of sets, which is denoted by $\textbf{SET}$, the equalizer of two morphisms $\alpha,\beta:X\rightarrow Y$ is the set $E=\{x\in X | \alpha(x)=\beta(x)\}$ together with the inclusion map  $\epsilon:E\hookrightarrow X$

\medskip

Let $\mathcal{C}$ be a locally small category and $X$ is an object in $\mathcal{C}$. By $T$-points of $X$, we mean $X(T):=Hom_\mathcal{C}(T,X)$ for any $T\in Obj(\mathcal{C})$. The functor of points of X is a functor which is denoted by X(.) and is defined as follows:

$$\begin{matrix} X(.):\mathcal{C} \rightarrow \textbf{SET}\\
\qquad\quad S \mapsto X(S)\\
\\
X(.): Hom_{\mathcal{C}}(S,T)\rightarrow Hom_{\textbf{SET}}(X(T),X(S))\\
\varphi \mapsto X(\varphi),\end{matrix}$$\\
where $X(\varphi):f\mapsto f\circ \varphi.$ A functor $F:\mathcal{C} \to \textbf{SET}$ is called representable if there exists an object $X$ in $\mathcal{C}$ such that $F$ and $X(.)$ are isomorphic. Then one may say that $F$ is represented by $X$. 
The category of functors from $\mathcal{C}$ to SET is denoted by $[\mathcal{C}, \textbf{SET}]$. It is shown that the category of all representable functors from $\mathcal{C}$ to SET is a subcategory of $[\mathcal{C}, \textbf{SET}]$.

\medskip

Corresponding to each morphism $\psi:X\rightarrow Y$, there exists a natural transformation
$\psi(.)$ from $X(.)$ to $Y(.)$. This transformation correspond the mapping 
$\psi(T):X(T)\rightarrow Y(T)$ with $\xi\mapsto \psi\circ \xi$ for each $T\in Obj(\mathcal{C})$. Now set:
$$\begin{matrix} \mathcal{Y}:\mathcal{C}\rightarrow [\mathcal C,\textbf{SET}]\\ X \mapsto X(.)\\ \psi \mapsto \psi(.). \end{matrix}$$
Obviously, $\mathcal{Y}$ is a covariant  functor and it is called \textit{\textbf{Yoneda embedding}}.
\begin{lemma}\label{Yoneda}
The Yoneda embedding is full and faithful functor, i.e. the map $$Hom_\mathcal{C}(X,Y)\longrightarrow Hom_{[\mathcal{C},\textbf{SET}]}(X(.),Y(.))$$ 
is a bijection for each $X,Y\in Obj(\mathcal{C})$
\end{lemma}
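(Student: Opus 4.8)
The plan is to deduce the statement from the general Yoneda Lemma: natural transformations out of a representable functor correspond bijectively to elements of the target functor evaluated at the representing object. First I would fix an arbitrary presheaf $F\in[\mathcal{C},\textbf{SET}]$ and build a map
$\Phi:\Hom_{[\mathcal{C},\textbf{SET}]}(X(.),F)\to F(X)$ by $\eta\mapsto\eta_X(\id_X)$, i.e.\ by evaluating the component $\eta_X$ at the identity of $X$. The heart of the argument is that naturality of $\eta$ already pins down every component. For an object $T$ and any $\xi\in X(T)=\Hom_\mathcal{C}(T,X)$, one feeds the morphism $\xi$ itself into the naturality square of $\eta$; since $X(.)$ sends $\xi$ to precomposition, so that $X(\xi)(\id_X)=\id_X\circ\xi=\xi$, evaluating the identity $\eta_T\circ X(\xi)=F(\xi)\circ\eta_X$ at $\id_X$ gives $\eta_T(\xi)=F(\xi)(\eta_X(\id_X))$. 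This shows immediately that $\Phi$ is injective, because $\eta$ is reconstructed from the single datum $\eta_X(\id_X)$.

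For surjectivity I would run the same formula backwards: given $u\in F(X)$, define $\eta^u$ by $\eta^u_T(\xi):=F(\xi)(u)$ for $\xi\in X(T)$, verify that the naturality squares commute using functoriality of $F$ together with associativity of composition, and note $\eta^u_X(\id_X)=F(\id_X)(u)=u$. Hence $\Phi$ is a bijection for every $F$.

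Taking $F=Y(.)$ specializes the target to $F(X)=Y(X)=\Hom_\mathcal{C}(X,Y)$, so $\Phi$ becomes a bijection $\Hom_{[\mathcal{C},\textbf{SET}]}(X(.),Y(.))\cong\Hom_\mathcal{C}(X,Y)$. It then remains only to identify the inverse of this $\Phi$ with the map induced by the Yoneda embedding. This is immediate from the defining formula for $\psi(.)$: for $\psi\in\Hom_\mathcal{C}(X,Y)$ one computes $\Phi(\psi(.))=\psi(.)_X(\id_X)=\psi\circ\id_X=\psi$, so the composite of $\psi\mapsto\psi(.)$ with $\Phi$ is the identity on $\Hom_\mathcal{C}(X,Y)$. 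Since $\Phi$ is a bijection, the map $\psi\mapsto\psi(.)$ must be its inverse and is therefore also a bijection, which is exactly the assertion that $\cY$ is full and faithful.

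I expect the only delicate point to be bookkeeping the contravariance of $X(.)$ correctly in the naturality square, so that $X(\xi)$ is genuinely precomposition with $\xi$ and hence carries $\id_X$ to $\xi$; everything else reduces to the functor axioms and associativity. One should also check in passing that $\psi(.)$ is indeed a natural transformation, but this is the same associativity computation and presents no real obstacle.
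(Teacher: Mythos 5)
Your argument is the standard proof of the Yoneda lemma and it is correct: reconstructing $\eta$ from $\eta_X(\id_X)$ via the naturality square applied to $\xi\in X(T)$, checking that $\eta^u_T(\xi):=F(\xi)(u)$ is natural, specializing to $F=Y(.)$, and identifying the Yoneda embedding as the inverse of $\Phi$ are all handled properly, including the contravariance bookkeeping. The paper itself supplies no proof of this lemma --- it only cites \cite{carmelibook}, where essentially this same argument appears --- so your proposal matches the intended (standard) approach and is simply a complete write-up of it.
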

\begin{proof}
see \cite{carmelibook}.
\end{proof}
Thus according to this lemma, $X,Y\in Obj(\mathcal{C})$ are isomorphic if and only if their functor of points are isomorphic. The Yoneda embedding is an equivalence between $\mathcal{C}$ and a subcategory of representable functors in $[\mathcal{C},\textbf{SET}]$, since not all functors are representable.

\begin{remark}\label{charttheorem}
Consider the superdomain $\mathbb R^{p|q}$ and arbitrary supermanifold $T$. Let $f_i\in\cO(T)_0$ and $g_j\in\cO(T)_1$ such that $1\leq i \leq p$, $1\leq j \leq q$ be even and odd elements respectively. By Theorem 4.3.1 in \cite{vsv}, One can determine a unique morphism $\psi:M\rightarrow \mathbb R^{p|q}$, 
by setting $t_i\mapsto f_i$ and $\theta_j\mapsto g_j$ where $(t_i, \theta_j)$ is a  global even and odd coordinate system on $\mathbb R^{p|q}$. Thus $\psi$ may be represented by $(f_i, g_j)$.
\end{remark}

Let $\textbf{SM}$ be the category of supermanifolds and their morphisms. Obviously, $\textbf{SM}$ is a locally small category and  has finite product property. In addition it has a terminal object $\mathbb{R}^{0|0}$, that is the constant sheaf $\mathbb{R}$ on a singleton $\{0\}$.

\medskip

Let $M=(|M|,\cO_M)$ be a supermanifold and $p\in |M|$. There is a map $j_p=(|j_p|,{j_p}^*)$ where:
$$\begin{matrix} |j_p|:\{0\} \rightarrow |M|\qquad\qquad 
& {j_p}^*:\mathcal{O}_M\rightarrow \mathbb{R} \\
& \qquad\qquad\qquad\quad g \mapsto \tilde g(p)=:ev_p(g).\end{matrix}$$
So, for each supermanifold $T$, one can define the morphism
\begin{align}\hat{p}:T\rightarrow \mathbb{R}^{0|0}\xrightarrow{j_p}M\label{phat}\end{align}
as a composition of $j_p$ and the unique morphism $T\rightarrow \mathbb{R}^{0|0}$.

\medskip

By super Lie group, we mean a group-object in the category $\textbf{SM}$. More precisely, it is defined as follows:
\begin{definition}
A super Lie group $G$ is a supermanifold $G$ together the morphisms  $\mu:G\times G \rightarrow G,\quad i:G \rightarrow G,\quad E:\mathbb{R}^{0|0} \rightarrow G$ say multiplication, inverse and unit morphisms respectively, such that the following equations are satisfied
\begin{align*}
\mu \circ (\mu\times 1_G)&=\mu \circ (1_G\times \mu)\\
\mu \circ (1_G\times \hat{e})\circ \bigtriangleup_G&=1_G=\mu \circ (\hat{e}\times 1_G)\circ \bigtriangleup_G\\
\mu \circ (1_G\times i)\circ \bigtriangleup_G&=\hat{e}=\mu \circ (i\times 1_G)\circ \bigtriangleup_G
\end{align*}
where $\mu, i, E$ are multiplication, inverse and unit morphisms respectively, $1_G$ is identity on $G$ and $\hat{e}$ is the morphism according to (\ref{phat}) for element $e\in |G|$ and also $\Delta_G$ be the diagonal map on $G$.
\end{definition}
Note that, there is a Lie group associated with each super Lie group. Indeed, let $G$ is a super Lie group and $\tilde{G}$ is reduced manifold associated to $G$ and $\tilde{\mu}, \tilde{i}, \tilde{E}$ are reduced morphisms associated to $\mu, i, E$ respectively. Since $G \rightarrow \tilde{G}$ is a functor, $(\tilde{G}, \tilde{\mu}, \tilde{i}, \tilde{E})$ is a group-object of the category of differentiable manifolds.

\medskip
\begin{remark}\label{remark2}
Simply, one can show that any super Lie group $G$ induced a group structure over its $T$-points for any arbitrary supermanifold $T$. This means that the functor $T\rightarrow G(T)$ takes values in category of groups. Moreover, for any other supermanifold $S$ and morphism $T\rightarrow S$, the corresponding map $G(T)\rightarrow G(S)$ is a homomorphism of groups. One can also define a super Lie group as a representable functor $T\rightarrow G(T)$ from category $\textbf{SM}$ to category of groups. If such functor represented by a supermanifold $G$, then the maps $\mu,i,E$ are obtained by Yoneda's lemma and the maps $\mu_T:G(T)\times G(T)\rightarrow G(T),\quad i_T:G(T)\rightarrow G(T)$ and $E_T:\mathbb{R}^{0|0}(T)\rightarrow G(T)$.
\end{remark}
\begin{example}
Let $(t, \theta)$ be a global coordinate system on superdomain $\mathbb{R}^{1|1}$. We use the language of functor of points and define multiplication 
$\mu:\mathbb{R}^{1|1}\times \mathbb{R}^{1|1}\rightarrow \mathbb{R}^{1|1}$.
Let $T$ be an arbitrary supermanifold, we define:
\begin{align*}
   \mu_T:\mathbb{R}^{1|1}(T)\times \mathbb{R}^{1|1}(T)& \longrightarrow  \mathbb{R}^{1|1}(T)
    \\
          (f,g),(f^{\prime},g^{\prime}) & \longmapsto (f+f^{\prime},g+g^{\prime})
\end{align*}
where, according to Remark \ref{charttheorem}, there exists a unique morphism from $T$ to $\mathbb{R}^{1|1}$ corresponding to each of $(f,g),(f^{\prime},g^{\prime})$, where $f,f^{\prime}\in \cO(T)_0$ and $g,g^{\prime}\in\cO_1.$ 
$\mathbb{R}^{1|1}(T)$ with $\mu_T$ is a common group. Thus, by Remark \ref{remark2}, $\mathbb{R}^{1|1}$ is a super Lie group.  
\end{example} 
Analogously, one may show that the superdomain $\mathbb{R}^{p|q}$ is a super Lie group. 
\begin{example}
Let $V$ be a finite dimensional super vector space of dimension $m|n$ and Let $\{R_1,\cdots,R_{m+n}\}$ be a  basis of V for which the first $m$ elements. we denote even and odd elements of an its basis by $r_i$. Consider the functor 
\begin{align*}
F: & \textbf{SM} \rightarrow \textbf{Grp}\\
& T\mapsto Aut_{\mathcal O(T)}(\mathcal O(T)\otimes V)\end{align*}
where $F$ maps each seupermanifold $T$ to the group of even $\mathcal O(T)$-module automorphisms of $\mathcal O(T)\otimes V$, and $\textbf{Grp}$ is the category of groups. Consider the supermanifold $\textbf{End}(V)=\Big(End(V_{0})\times End(V_{1}),\mathcal{A}\Big)$ where $\mathcal{A}$ is the structure sheaf $\textbf{End}(V)$. We denote a basis of $\textbf{End}(V)$ by $\{F_{ij}\}$ where $F_{ij}$ is an even linear transformation on V with $R_k\mapsto \delta_{ik}R_j$. If $\{f_{ij}\}$ is the corresponding dual basis, then it may be considered as a global coordinates on $\textbf{End}(V)$. Let $X$ be the open subsupermanifold of $\textbf{End}(V)$ corresponding with the open set:
$$|X|=GL(V_{0})\times GL(V_{1})\subset End(V_{0})\times End(V_{1})$$
Thus, we have
$$X=\Big(GL(V_{0})\times GL(V_{1}),\mathcal{A}|_{GL(V_{0})\times GL(V_{1})}\Big).$$
It can be shown that the functor $F$ may be represented by $X$. For this, one may show $Hom(T,X)\cong Aut_{\mathcal O(T)}(\mathcal O(T)\otimes V)$. First, we have
$$Hom(T,X)=Hom(\mathcal A_X(|X|),\mathcal O(T))$$
 It is known that each $\psi\in Hom(\mathcal A_X(|X|),\mathcal O(T))$ may be uniquely determined by $\{g_{ij}\}$ where $g_{ij}=\psi(f_{ij})$.
 Now set $\bar{\psi}(R-j)=\Sigma g_{ij}R_i$.
 One may consider $\bar{\psi}$ as an element of $Aut_{\mathcal{O}(T)}(\mathcal{O}(T)\otimes V)$. Obviously $\psi\mapsto \bar{\psi}$ is a bijection from $Hom(T,X)$ to $Aut_{\mathcal{O}(T)}(\mathcal{O}(T)\otimes V)$.
Thus the supermanifold $X$ is a super Lie group and denoted it by $GL(V)$ or $GL(m|n)$ if $V=\mathbb{R}^{m|n}$.
Therefore $T$- points of $GL(m|n)$ is the group of invertible even $m|n\times m|n$ supermatrices $\begin{pmatrix}
A & B\\ C & D \end{pmatrix}$ such that $A=(a_{ij}), B=(b_{il}), C=(c_{kj}), D=(d_{kl})$ with $a_{ij}, d_{kl}\in \mathcal O(T)_0,\quad b_{il},c_{kj}\in \mathcal O(T)_1 $ and the multiplication can be written as the matrix product.
\end{example}
Let $x\in|G|$, one can define the left and right translation by $x$ as 
\begin{align}
r_x:=\mu \circ (1_G\times \hat x)\circ \Delta_G,\label{pullefttrans}\\
l_x:=\mu \circ (\hat x\times 1_G)\circ \Delta_G,\label{pulrighttrans}
\end{align}
respectively. One can show that pullbacks of above morphisms are as following
\begin{align}
r_x^*:=(1_{\cO(G)}\otimes ev_x)\circ \mu^*,\label{lefttrans}\\
l_x^*:=(ev_x\otimes 1_{\cO(G)})\circ \mu^*.\label{righttrans}
\end{align}
One may also use the language of functor of points to describe two morphisms \ref{pullefttrans} and \ref{pulrighttrans}.
\begin{definition}
Let $M$ be a supermanifold and let $G$ be a super Lie group with $\mu,i$ and $E$ as its multiplication, inverse and unit morphisms respectively. A morphism $a:M\times G\rightarrow M$ is called a (right) action of $G$ on $M$, if the following diagrams are commuted:
\begin{Small}\begin{displaymath}
\xymatrix{
& M\times G\times G  \ar[rd]^{1_M\times\mu} \ar[ld]_{  a\times 1_G} & \\
M\times G \ar[dr]_a &  & M\times G \ar[ld]^{a}\\
& M & }\quad
\xymatrix{
& M\times G \ar[rd]^{a} & \\
M\ar[ur]^{(1_M\times\hat{e})\circ \Delta_M} \ar[rr]_{1_M} &  & M}
\end{displaymath}\end{Small}
where $\hat{e}, \Delta_M$ are as above. In this case, we say G acts from right on $M$. One can define left action analogously. 
\end{definition}
According to the above diagrams, one has:
$$\begin{matrix} a\circ(1_M\times\mu)=a\circ(a\times 1_G)&\qquad,\qquad & a\circ(1_M\times\hat{e})\circ \Delta_M=1_M. \end{matrix}$$
By Yoneda lemma (Lemma \ref{Yoneda}), one may consider, equivalently, the action of G as a natural transformation: 
$$a(.):M(.)\times G(.)\rightarrow M(.),$$
such that for each supermanifold $T$, the morphism $a_T: M(T)\times G(T)\rightarrow  M(T)$ is an action of group $G(T)$ on the set $M(T)$. This means:
\begin{enumerate}
\item[1.] $(m.g_1).g_2=m.(g_1g_2),\qquad \forall g_1,g_2\in G(T), \forall m\in M(T).$
\item[2.] $m.\hat{e}=m,\qquad \forall  m\in M(T).$
\end{enumerate}
Let  $p\in |M|$, define a map 
$$\begin{matrix} a_p:G\rightarrow M\\ \qquad a_p:=a\circ (\hat{p}\times 1_G)\circ \Delta_G, \end{matrix}$$
where $\hat{p}$ is the morphism (\ref{phat}) for $p\in |M|$. Equivalently, this map may be defined as 
$$\begin{matrix} (a_p)_T:G(T)\rightarrow M(T)\\
\qquad g\longmapsto \hat{p}.g \end{matrix}$$
One may easily show that $a_p$ has constant rank(see Proposition 8.1.5 in \cite{carmelibook}, for more details). Before next definition, we recall
that a morphism between supermanifolds, say $\psi:M\rightarrow N$ is a submersion at $x\in|M|$, if $(d\psi)_x$ is surjective and $\psi$ is called submersion, if this happens at each point. (For more detail about this, One can refer to \cite{vsv}, \cite{carmelibook}). $\psi$ is a surjective submersion, if in addition $\tilde{\psi}$ is surjective.
\begin{definition}
 Let $G$ acts on $M$ with action $a:M\times G \rightarrow M$. We say that $a$ is transitive, if there exist $p\in |M|$ such that $a_p$ is a surjective submersion.  
 \end{definition}
It is shown that, if $a_p$ be a submersion for one $p\in |M|$, then it is a submersion for all point in $|M|$.
The following proposition will be required in the last section.
\begin{proposition}\label{Transitive}
Let $a:M\times G \rightarrow M$ be an action, $a$ is transitive if and only if $(a_p)_{\mathbb{R}^{0|q}}:G(\mathbb{R}^{0|q})\rightarrow M(\mathbb{R}^{0|q})$ is surjective, where q is the odd dimension of $G$
\end{proposition}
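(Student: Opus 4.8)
The plan is to fix a point $p\in|M|$ and prove the equivalence ``$a_p$ is a surjective submersion $\iff$ $(a_p)_{\mathbb{R}^{0|q}}$ is surjective''; since transitivity means precisely that such a $p$ exists, the proposition follows by quantifying over $p$. Throughout I use that $a_p\colon G\to M$ has constant rank (Proposition 8.1.5 of \cite{carmelibook}), so that it is a submersion exactly when its rank equals $\dim M$, and I denote by $\tilde a_p$ the reduced map.

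For the forward implication, assume $a_p$ is a surjective submersion and let $\xi\colon\mathbb{R}^{0|q}\to M$ be any $\mathbb{R}^{0|q}$-point, with $|\xi|$ equal to the single value $x\in|M|$. Because $a_p$ is a submersion, the submersion normal form provides a local section $\sigma\colon U\to G$ on an open set $U\ni x$ with $a_p\circ\sigma=1_U$. Since $\mathbb{R}^{0|q}$ has a single topological point, $\xi$ factors through the open subsupermanifold over $U$, so $\eta:=\sigma\circ\xi$ is defined and satisfies $a_p\circ\eta=\xi$. Hence $(a_p)_{\mathbb{R}^{0|q}}$ is surjective; note that no gluing of local sections is needed, which is exactly what makes the one-point superdomain $\mathbb{R}^{0|q}$ convenient.

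For the converse, assume $(a_p)_{\mathbb{R}^{0|q}}$ is surjective. First I recover surjectivity of $\tilde a_p$: given $x\in|M|$, compose $x$ with the projection $\mathbb{R}^{0|q}\to\mathbb{R}^{0|0}$ to view it as an $\mathbb{R}^{0|q}$-point $\bar x$, lift $\bar x$ to some $\eta\in G(\mathbb{R}^{0|q})$, and pass to reduced spaces to obtain $\tilde a_p(|\eta|)=x$. Together with constant rank this forces the even part of the rank to be maximal, since a constant-rank map with surjective reduction must have reduced rank equal to $\dim\tilde M$. It remains to show the odd rank is maximal, and this is the crux. Suppose it is deficient. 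By the constant-rank theorem, near a chosen preimage $g$ of a point $x$ the map $a_p$ has a normal form in which the local image is a sub-supermanifold $S$ with $\dim(T_xS)_1<s$; moreover, for a constant-rank map the odd part of $\im(da_p)_{g'}$ equals $(T_xS)_1$ for \emph{every} $g'$ in the fiber $\tilde a_p^{-1}(x)$, being the tangent to the common local image germ. Choosing an odd tangent direction $w\in(T_xM)_1\setminus(T_xS)_1$ and building an $\mathbb{R}^{0|q}$-point $\xi$ over $x$ whose linear odd part is $w$ (one odd generator of $\wedge\mathbb{R}^q$ suffices), the image point of any would-be lift $\eta$ lies in the fiber, so its first-order part would force $w\in(T_xS)_1$, a contradiction. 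Hence the odd rank is maximal and $a_p$ is a submersion with surjective reduction, i.e.\ a surjective submersion.

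I expect the main obstacle to be this last, odd-rank, step of the converse. The delicate point is not the existence of a missing odd direction but the need to exclude lifts through \emph{every} preimage in the fiber over $x$ simultaneously; this is resolved by invoking constancy of the rank, which makes $\im(da_p)_{g'}$ independent of $g'$ along the fiber. The choice $q=\dim_{\mathrm{odd}}G$ plays no role beyond being safely large: it bounds the odd rank of $a_p$ from above and guarantees that $\mathbb{R}^{0|q}$ carries enough odd generators to realise any prescribed odd tangent direction of $M$, so that the obstructing point $\xi$ can always be constructed.
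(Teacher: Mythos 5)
The paper offers no argument of its own here---the ``proof'' is a bare citation of Proposition 9.1.4 in \cite{carmelibook}---so there is nothing internal to compare against; your strategy (constant rank of $a_p$, local sections of a submersion for the forward direction, and detection of reduced surjectivity plus the odd rank via $\mathbb{R}^{0|q}$-points for the converse) is the standard route and is the one the cited source follows in spirit. The forward implication is correct, as is the recovery of surjectivity of $\tilde a_p$ and of the maximal even rank from constancy of the rank.

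The genuine weak point is exactly the step you single out as the crux, and your resolution of it does not hold as stated. You assert that for a constant-rank map the odd part of $\im(da_p)_{g'}$ is the same subspace $(T_xS)_1$ for every $g'$ in the fiber over $x$, ``being the tangent to the common local image germ.'' Constant rank fixes only the \emph{dimension} of $\im(da_p)_{g'}$, not the subspace, and the local image germs at distinct preimages of $x$ need not coincide (already classically: a rank-one map from a disconnected source can send two fiber points onto two different curves through $x$). So a would-be lift $\eta$ supported at some other preimage $g'$ is not excluded by what you prove. The claim you need is nonetheless true, but for a group-theoretic reason you never invoke: since $a_p\circ l_{g'}=a_{x}$ for every $g'$ with $\tilde a_p(g')=x$ (both sides send a $T$-point $h$ to $\hat x\cdot h$), one gets $(da_p)_{g'}=(da_x)_e\circ\big((dl_{g'})_e\big)^{-1}$ and hence $\im(da_p)_{g'}=\im(da_x)_e$ independently of $g'$. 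Alternatively you can bypass the issue by choosing $\xi$ whose odd linear part realises a basis of $(T_xM)_1$ rather than a single vector $w$, so that a lift through \emph{any} $g'$ forces $\big((da_p)_{g'}\big)_1$ to be onto. Either repair closes the gap, after which the proof is correct. (A minor caveat: your obstructing $\xi$ needs at least one odd generator, so the argument---like the statement itself---tacitly assumes $q\geq 1$ whenever $M$ has positive odd dimension.)
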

\begin{proof} see proof proposition 9.1.4 in \cite{carmelibook}. \end{proof}
\begin{definition}
Let $G$ be a super Lie group and let $a$ be an action on supermanifold $M$. By stabilizer of $p\in |M|$ we mean a supermanifold $G_p$ equalizing the diagram $$G\overset{a_p}{\underset{\hat{p}} {\rightrightarrows}}M.$$
 \end{definition}
It is not clear that such an equalizer exists. In this regard, there are two propositions
\begin{proposition}\label{Isotropic}
Let $a:M\times G \rightarrow M$ be an action, then
\begin{enumerate} 
\item[1.] The  following diagram admits an equalizer $G_p$ 
$$G\overset{a_p}{\underset{\hat{p}} {\rightrightarrows}}M.$$
\item[2.] $G_p$ is a sub super Lie group of $G$.
\item[3.] the functor $T\rightarrow (G(T))_{\hat{p}}$ is represented by $G_p$, where $(G(T))_{\hat{p}}$ is the stabilizer of $\hat{p}$ of the action of $G(T)$ on $M(T)$.
\end{enumerate} \end{proposition}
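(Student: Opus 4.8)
The plan is to handle all three parts through the functor of points, reducing the geometric existence of the equalizer to the constant rank theorem for supermanifolds and then extracting the group structure and the functorial description from Yoneda's lemma (Lemma \ref{Yoneda}). First I would identify, at the level of spaces, what the equalizer must be. Since $\hat{p}:G\rightarrow M$ factors through the terminal object $\mathbb{R}^{0|0}$, it is the constant morphism at $p$, so equalizing $a_p$ and $\hat{p}$ amounts to describing the fiber $a_p^{-1}(p)$. The crucial input is the fact, recalled just before the statement, that $a_p$ has constant rank. Applying the rank theorem for supermanifolds, around each point of $|G_p|:=|a_p|^{-1}(p)$ one obtains coordinates in which $a_p$ is a linear projection; consequently $a_p^{-1}(p)$ inherits the structure of a closed embedded subsupermanifold, which I name $G_p$, with embedding $\epsilon:G_p\hookrightarrow G$. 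The fiber is non-empty because the identity lies in it: by the second action axiom, $(a_p)_T(\hat{e})=\hat{p}$ for every $T$, so the reduced identity point sits in $|G_p|$.

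Next I would establish the universal property, which completes part (1). By construction $a_p\circ\epsilon=\hat{p}\circ\epsilon$, both being the constant morphism at $p$. Given any supermanifold $T$ and a morphism $\tau:T\rightarrow G$ with $a_p\circ\tau=\hat{p}\circ\tau$, the reduced map satisfies $|\tau|(|T|)\subseteq|G_p|$, so $|\tau|$ factors through $|G_p|$; on structure sheaves, the local normal form of $a_p$ shows that $\tau^*$ annihilates exactly the ideal cutting out $G_p$, namely the ideal generated by $a_p^*$ applied to those coordinates on $M$ that distinguish the fiber. Hence $\tau$ factors as $\tau=\epsilon\circ\sigma$ for a unique $\sigma:T\rightarrow G_p$, uniqueness being automatic since the embedding $\epsilon$ is a monomorphism. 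I expect this to be the main obstacle: making the sheaf-level factorization precise in the super setting, where one must verify that the pullback respects the full nilpotent structure and not merely the reduced spaces, so that $\sigma$ is a genuine morphism of supermanifolds.

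For part (3), I would invoke that $\Hom_{\textbf{SM}}(T,-)$ preserves equalizers, as equalizers are limits. Passing to $T$-points therefore turns the equalizer diagram into the corresponding equalizer diagram in $\textbf{SET}$, yielding $G_p(T)=\{g\in G(T):(a_p)_T(g)=(\hat{p})_T(g)\}$. Since $(a_p)_T(g)=\hat{p}\cdot g$ and $(\hat{p})_T(g)$ is the constant $T$-point at $p$, this set is exactly the stabilizer $(G(T))_{\hat{p}}$ of $\hat{p}$ under the action of $G(T)$ on $M(T)$; naturality in $T$ is immediate, giving the claimed representation.

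Finally, for part (2), I would use that stabilizers are subgroups at each stage: $(G(T))_{\hat{p}}$ is a subgroup of $G(T)$, closed under the multiplication $\mu_T$ and inverse $i_T$ of Remark \ref{remark2}, and this assignment is natural in $T$. Thus the subfunctor $T\mapsto(G(T))_{\hat{p}}=G_p(T)$ is group-valued, and by the fullness and faithfulness of the Yoneda embedding (Lemma \ref{Yoneda}) the morphisms $\mu$ and $i$ of $G$ restrict to morphisms $\mu|_{G_p}:G_p\times G_p\rightarrow G_p$ and $i|_{G_p}:G_p\rightarrow G_p$ for which $\epsilon$ is a homomorphism; the super Lie group axioms for $G_p$ then follow from those for $G$ by faithfulness. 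Hence $G_p$ is a sub super Lie group of $G$.
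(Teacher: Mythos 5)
Your argument is correct and takes essentially the same route as the paper's source: the paper itself gives no proof beyond citing Proposition 8.4.7 of \cite{carmelibook}, and the proof there is precisely your construction --- use the constant rank of $a_p$ to realize the fiber over $p$ as a closed subsupermanifold satisfying the equalizer's universal property, then identify $G_p(T)$ with the stabilizer of $\hat{p}$ in $G(T)$ via preservation of limits by $\Hom(T,-)$ and transport the group structure back through Yoneda. No gaps worth flagging.
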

\begin{proof} See proof proposition 8.4.7 in \cite{carmelibook}. \end{proof}
 \begin{proposition}\label{equivariant}
 Suppose $G$ acts transitively on $M$. There exists a $G$-equivariant isomorphism 
 \begin{displaymath}\xymatrix{ \dfrac{G}{G_p}\ar[r]^{\cong} & M. } \end{displaymath} \end{proposition}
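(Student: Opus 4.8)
The plan is to realize $M$ as the image of the orbit morphism $a_p\colon G\to M$ and to show that $a_p$ factors through the projection onto $G/G_p$ as an isomorphism. First I would invoke the construction of the quotient: by Proposition \ref{Isotropic} the stabilizer $G_p$ is a sub super Lie group of $G$, so, following the quotient construction in \cite{carmelibook}, the homogeneous superspace $G/G_p$ exists as a supermanifold, equipped with a surjective submersion $\pi\colon G\to G/G_p$ that is universal among morphisms out of $G$ which are constant on the $G_p$-cosets and satisfies $\dim(G/G_p)=\dim G-\dim G_p$.

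Next I would check that $a_p$ descends along $\pi$. Working with $T$-points and using Proposition \ref{Isotropic}(3), which identifies $G_p(T)$ with the stabilizer $(G(T))_{\hat p}$, the associativity axiom $a\circ(1_M\times\mu)=a\circ(a\times 1_G)$ gives, for $h\in G_p(T)$ and $g\in G(T)$, the equality $a_p(hg)=(\hat p\cdot h)\cdot g=\hat p\cdot g=a_p(g)$. Thus $(a_p)_T$ is constant on the $G_p$-cosets for every $T$, so $a_p$ itself is $G_p$-invariant, and the universal property of $\pi$ produces a unique morphism $\bar a\colon G/G_p\to M$ with $\bar a\circ\pi=a_p$.

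The core of the argument is then to prove that $\bar a$ is an isomorphism. Since $a$ is transitive, $a_p$ is a surjective submersion, and because $a_p$ has constant rank this rank equals $\dim M$ at every point; as the fibres of $a_p$ are $G_p$-cosets of dimension $\dim G_p$, a dimension count yields $\dim M=\dim G-\dim G_p=\dim(G/G_p)$. From $a_p=\bar a\circ\pi$ with $\pi$ a surjective submersion I would conclude that $\bar a$ is itself a submersion, and being a submersion between supermanifolds of equal dimension it is \'etale, i.e. a local isomorphism. To control $\bar a$ globally I would pass to reduced manifolds: the reduced action $\tilde a$ is a transitive action of the Lie group $\tilde G$ on $\tilde M$ with isotropy $\widetilde{G_p}$, so the classical theory of homogeneous spaces (as in \cite{Greub2}) identifies $\widetilde{G/G_p}=\tilde G/\widetilde{G_p}$ with $\tilde M$ via $\widetilde{\bar a}$. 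Since a morphism of supermanifolds that is a local isomorphism and whose reduced map is a diffeomorphism is an isomorphism, this gives the desired $\bar a\colon G/G_p\xrightarrow{\cong}M$.

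Finally, $G$-equivariance is inherited from the orbit morphism: the same associativity axiom shows $a_p$ intertwines multiplication on $G$ with the action $a$, a relation that descends through $\pi$ to make $\bar a$ equivariant, which I would verify on $T$-points using Lemma \ref{Yoneda}. I expect the main obstacle to be the passage from the local isomorphism property to a genuine isomorphism of supermanifolds: one cannot simply compare $T$-points, because $(G/G_p)(T)$ is a sheafification of the naive quotient $G(T)/G_p(T)$ rather than the quotient itself, so bijectivity on points holds only locally. The real content therefore lies in combining the constant-rank and dimension computation for $a_p$ with the classical homogeneous-space isomorphism on the reduced manifolds, after which the \'etale-plus-reduced-bijective criterion closes the argument.
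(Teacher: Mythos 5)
Your argument is correct and is essentially the proof of Proposition 6.5 in \cite{g-spaces}, which is all the paper itself offers here (the text gives only the citation): factor the orbit morphism $a_p$ through the quotient projection, observe that the induced map is a submersion between supermanifolds of equal dimension and hence a local isomorphism, and upgrade this to a global isomorphism using bijectivity of the reduced map coming from the classical theory of homogeneous spaces. The one piece of bookkeeping to keep straight is the coset convention: for the paper's \emph{right} action your computation $a_p(hg)=a_p(g)$ shows invariance under \emph{left} multiplication by $G_p$, so the relevant quotient is by right cosets $G_p\,g$, a left/right mismatch that is already present in the paper's own notation $G/G_p$ and does not affect the substance of the argument.
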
 
 \begin{proof} See proof proposition 6.5 in \cite{g-spaces}. \end{proof}
\section{Supergrassmannian}\label{supergrass}
Supergrassmannians are introduced by Manin in \cite{ma1}. In this section, we study their sheaf structures in more details. For convenience from now we set $\alpha :=k(m-k)+l(n-l)$ and 
$\beta:= l(m-k)+k(n-l)$ and also demonstrate any supermatrix with four blocks, say $B_1, B_2, B_3, B_4$. Upper left and lower right blocks, $B_1, B_4$ are called even blocks and lower left and upper right blocks, $ B_2, B_3 $ are called odd blocks.
By a supergrassmannian,  $G_{k|l}(m|n)$, we mean a
supermanifold which is constructed by gluing superdomains
$\mathbb{R}^{\alpha|\beta}=(\mathbb{R}^\alpha, \, C^{\infty}(\mathbb{R}^\alpha)\otimes \wedge \mathbb{R}^\beta)$ 
as follows:\\
Let $I\subset \{1, \cdots, m \}$ and $ R \subset \{ m+1, \cdots, m+n\}$ be sets with $ k $ and $ l $ elements respectively. The elements of $ I $ are called even indices and the elements of $ R $ are called odd indices. In this case $I|R$ is called  $k|l$-index. Set $U_{I|R}=(|U_{I|R}|, \mathcal{O}_{I|R})$, where
$$|U_{I|R}|=\mathbb{R}^\alpha \quad,\quad \mathcal{O}_{I|R}(\mathbb{R}^\alpha)=\, C^{\infty}(\mathbb{R}^\alpha)\otimes \wedge \mathbb{R}^\beta.$$
Let each superdomain $U_{I|R}$ is labeled by an even $k|l\times m|n$ supermatrix, say $A_{I|R}$ with four blocks, $ B_1, B_{2}, B_3, B_4 $ as above. Except for columns with indices in $I \cup R$, which together form a minor denoted by $M_{I|R}(A_{I|R})$, even and odd blocks are filled from up to down and left to right by $x_a^I, e_b^I$, the even and odd free generators of $\mathcal{O}_{I|R}(\mathbb{R}^\alpha)$, respectively. This process impose an ordering on the set of generators. In addition $ M_{I|R}A_{I|R} $ is supposed to be a unit supermatrix.

For example, let $I=\{1\}, R=\{1,2\}$ and let $ I|R $ be an $1|2$-index in $G_{1|2}(3|3)$. In this case the set of generators of $\mathcal{O}_{I|R}(\mathbb{R}^\alpha)$ is 
\begin{equation*} \{x_1, x_2, x_3, x_4; e_1, e_2, e_3, e_4, e_5\} \end{equation*}
 and $ A_{I|R} $ is:
\begin{equation*}
\left[
\begin{array}{ccc|ccc}
1 & x_1 & x_2 & 0 & 0 & e_5 \\
\hline
0 & e_1 & e_3 & 1 & 0 & x_3 \\
0 & e_2 & e_4 & 0 & 1 & x_4 \\
\end{array}
\right]
\end{equation*}
Note that, in this example,
$\{x_1,e_1,e_2,x_2,e_3,e_4,e_5,x_3,x_4\}$
is corresponding total ordered set of generators. The transition map between two superdomains, $U_{I|R}$ and $U_{J|S}$ is denoted by 
$$g_{I|R,J|S}:\bigg(|U_{I|R}|\cap |U_{J|S}|,\mathcal{O}_{I|R}|_{|U_{I|R}|\cap |U_{J|S}|}\bigg)\rightarrow \bigg(|U_{I|R}|\cap |U_{J|S}|,\mathcal{O}_{J|S}|_{|U_{I|R}|\cap |U_{J|S}|}\bigg).$$
It may be defined whenever $M_{J|S}A_{I|R}$ is invertible. In this case the following equation defines how to change coordinates(transition map)
\begin{equation}\label{transitionmap}
D_{J|S}\bigg(\big(M_{J|S}(A_{I|R})\big)^{-1}A_{I|R}\bigg)=D_{J|S}(A_{J|S}),
\end{equation}
where $D_{J|S}(A_{I|R})$ is a matrix which is remained after omitting $M_{J|S}(A_{I|R})$. Clearly, this map is defined whenever $M_{J|S}(A_{I|R})$ is invertible.
For example in $G_{1|2}(3|3)$ suppose $I=\{2\}, R=\{1,3\}, J=\{3\}, S=\{2,3\} $ , so $I|R, J|S$ are $ 1|2 $-indices. We have:
\begin{equation*}
A_{I|R}=\left[
\begin{array}{ccc|ccc}
x_1 & 1 & x_2 & 0 & e_5 & 0 \\
\hline
e_1 & 0 & e_3 & 1 & x_3 & 0 \\
e_2 & 0 & e_4 & 0 & x_4 & 1 \\
\end{array}
\right], \quad 
A_{J|S}=\left[
\begin{array}{ccc|ccc}
x_1 & x_2 & 1 & e_5 & 0 & 0 \\
\hline
e_1 & e_3 & 0 & x_3 & 1 & 0 \\
e_2 & e_4 & 0 & x_4 & 0 & 1 \\
\end{array}\right],
\end{equation*}
\begin{equation*}
M_{J|S}A_{I|R}=\left[
\begin{array}{c|cc}
 x_2 & e_5 & 0 \\
\hline
 e_3 & x_3 & 0 \\
 e_4 & x_4 & 1 \\
\end{array}
\right].
\end{equation*}
The transition map between $U_{I|R}$ and $U_{J|S}$ is obtained by substituting the above supermatrices in (\ref{transitionmap}).
By a straightforward proof, it can be shown that the following proposition exist:
\begin{proposition} Let $g_{I|R,J|S}$ be as above, then
\begin{enumerate}
\item[1.] $g_{I|R,I|R}=id.$
\item[2.] $g_{I|R,J|S} g_{J|S,I|R}=id.$
\item[3.] $g_{I|R,J|S} g_{J|S,K|T} g_{K|T,I|R}=id.$
\end{enumerate}
\end{proposition}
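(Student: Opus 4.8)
The plan is to pass to the functor of points and reduce all three identities to elementary supermatrix algebra, then invoke the Yoneda embedding (Lemma \ref{Yoneda}) to transport the conclusion back to morphisms of supermanifolds. First I would describe a $T$-point of the chart $U_{I|R}$, for an arbitrary test supermanifold $T$: by Remark \ref{charttheorem} it is nothing but an assignment of values in $\cO(T)$ to the free generators $x_a^I, e_b^I$, that is, an even $k|l\times m|n$ supermatrix $A$ over $\cO(T)$ whose distinguished minor satisfies $M_{I|R}(A)=\id$. With this dictionary, the defining equation (\ref{transitionmap}) says that, on $T$-points, the transition map $g_{I|R,J|S}$ sends $A$ to $\big(M_{J|S}(A)\big)^{-1}A$, defined precisely on the open locus where $M_{J|S}(A)$ is invertible. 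One checks immediately that $M_{J|S}\big((M_{J|S}(A))^{-1}A\big)=\id$, so the image genuinely lies in the chart $U_{J|S}$, and the operator $D_{J|S}$ merely reads off the non-normalized entries.

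The key computational step is the observation that selecting a fixed block of columns commutes with left multiplication: for any even $G\in GL(k|l)(T)$ one has $M_{J|S}(GA)=G\,M_{J|S}(A)$, since restricting $GA$ to the columns indexed by $J\cup S$ equals $G$ times the restriction of $A$. Applying this with $G=\big(M_{J|S}(A)\big)^{-1}$ yields the master identity
\begin{equation*}
M_{K|T}\Big(\big(M_{J|S}(A)\big)^{-1}A\Big)=\big(M_{J|S}(A)\big)^{-1}M_{K|T}(A),
\end{equation*}
from which a one-line cancellation gives
\begin{equation*}
\Big(M_{K|T}\big((M_{J|S}(A))^{-1}A\big)\Big)^{-1}\big(M_{J|S}(A)\big)^{-1}A=\big(M_{K|T}(A)\big)^{-1}A.
\end{equation*}
In other words, composing the $T$-point description of $g_{I|R,J|S}$ with that of $g_{J|S,K|T}$ reproduces the $T$-point description of $g_{I|R,K|T}$; equivalently the maps $A\mapsto \big(M_{P}(A)\big)^{-1}A$ form a groupoid indexed by the charts.

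The three assertions now follow from this groupoid law together with the obvious fact that $g_{I|R,I|R}$ acts by $A\mapsto \big(M_{I|R}(A)\big)^{-1}A=A$ because $M_{I|R}(A)=\id$. Property 1 is this last fact; Property 2 is the composite $g_{I|R,J|S}g_{J|S,I|R}$, which by the master identity collapses to $A\mapsto \big(M_{I|R}(A)\big)^{-1}A=A$; and Property 3 is the cyclic composite $g_{I|R,J|S}g_{J|S,K|T}g_{K|T,I|R}$, which collapses to $A\mapsto \big(M_{I|R}(A)\big)^{-1}A=A$ by two applications of the master identity. Since these equalities of natural transformations hold for every $T$, the Yoneda embedding promotes them to the desired equalities of transition morphisms. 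I expect the only genuinely delicate point to be the bookkeeping: verifying that the overlap conditions (invertibility of the relevant mixed even/odd minors such as $M_{J|S}(A_{I|R})$) match up so that each composition is defined on the correct open sub-supermanifold, and keeping track of the induced ordering of generators when identifying $D_{J|S}$ with the off-minor entries.
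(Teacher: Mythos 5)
Your proposal is correct and follows essentially the same route as the paper: the paper also reduces everything to the identity $M_{K|T}\big((M_{J|S}(A))^{-1}A\big)=(M_{J|S}(A))^{-1}M_{K|T}(A)$ followed by cancellation, working directly with the coordinate supermatrices $A_{I|R}$ that define the pullbacks rather than phrasing the computation on $T$-points. Your explicit Yoneda/functor-of-points packaging and the isolation of the ``master identity'' are only presentational differences.
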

\begin{proof}
For first equation, note that the map $ g_{I|R,I|R} $ is obtained from the following equality:
\begin{equation*}
D_{I|R}\bigg((M_{I|R}A_{I|R})^{-1}A_{I|R}\bigg)=D_{I|R}A_{I|R}.
\end{equation*}
Where the matrix $ M_{I|R}A_{I|R} $ is identity. So $g_{I|R,J|S}$ is defined by the following equality:
\begin{equation*}
D_{I|R}A_{I|R}=D_{I|R}A_{I|R}.
\end{equation*}
This shows the first equation. For second equality, let $ J|S $ be an another $ k|l $-index, so $ g_{I|R,J|S} $ is obtained by the following equality:
\begin{equation*}
D_{J|S}\bigg((M_{J|S}A_{I|R})^{-1}A_{I|R}\bigg)=D_{J|S}A_{J|S}
\end{equation*}
One may see that $ g_{J|S,I|R}\circ g_{I|R,J|S} $ is obtained by following equality:
\begin{equation*}
D_{I|R}\bigg(\bigg(M_{I|R}\Big((M_{J|S}A_{I|R})^{-1}A_{I|R}\Big)\bigg)^{-1}(M_{J|S}A_{I|R})^{-1}A_{I|R}\bigg)=D_{I|R}A_{I|R}.
\end{equation*}
For left side, we have
 \begin{align*}&=D_{I|R}\Bigg(\bigg((M_{J|S}A_{I|R})^{-1}M_{I|R}A_{I|R}\bigg)^{-1}(M_{J|S}A_{I|R})^{-1}A_{I|R}\Bigg)\\
 &=D_{I|R}\bigg(\bigg((M_{J|S}A_{I|R})^{-1}\bigg)^{-1}(M_{J|S}A_{I|R})^{-1}A_{I|R}\bigg)\\
 &=D_{I|R}\bigg((M_{J|S}A_{I|R})(M_{J|S}A_{I|R})^{-1}A_{I|R}\bigg)=D_{I|R}(A_{I|R})\end{align*}
Accordingly the map $ g_{J|S,I|R}\circ g_{I|R,J|S} $ is obtained by $ D_{I|R}A_{I|R}=D_{I|R}A_{I|R} $ and it shows that this map is identity.
For third equality, it is sufficient to show that the map $ g_{I|R,J|S}og_{J|S,T|P} $ is obtained from
\begin{equation*}
 D_{I|R}((M_{I|R}A_{T|P})^{-1}A_{T|P})=D_{I|R}A_{I|R} .
\end{equation*}
This case obtain from case 2 Analogously. 
\end{proof}
So the sheaves $(U_{I|R}, \mathcal O_{I|R})$ may be glued through the $g_{I|R, J|S}$ to construct the supergrassmannian $G_{k|l}(m|n)$. Indeed, according to \cite{vsv}, the conditions of the above proposition are necessary and sufficient for gluing.
\section {Supergrassmannian as homogeneous superspace}
In this section, regarding homogeneous superspace, we first quickly recall some definitions from \cite{g-spaces} and \cite{carmelibook}. For more information one may see these papers. Let $G=(|G|,\mathcal{O}_G)$ be a super Lie group and  $H=(|H|,\mathcal{O}_H)$ a closed sub super Lie group of $G$.
One can define a supermanifold structure on the topological space $|X|=\dfrac{|G|}{|H|}$ as follows:\\
Let $\mathfrak{g}=Lie(G)$ and $\mathfrak{h}=Lie(H)$ be super Lie algebras corresponding with $G$, $H$. For each $Z\in \mathfrak{g}$, let $D_{Z}$ be the left invariant vector field on $G$ associated with $Z$. For subalgebra $\mathfrak{h}$ of $\mathfrak{g}$ set:
$$\forall U\subset |G| \qquad \mathcal{O}_{\mathfrak{h}}(U):=\{f\in \mathcal{O}_{G}(U)| D_{Z}f=0 \quad on\hspace{5pt} U, \quad \forall Z\in \mathfrak{h}\}.$$
On the other hand, for any open subset $U\subset |G|$ set:
$$\mathcal{O}_{inv}(U):=\{f\in\mathcal{O}_{G}(U)|\quad \forall x_0 \in |H|,\hspace{5pt} r^*_{x_0}f=f \}.$$
If $|H|$ is connected, then $\mathcal{O}_{inv}(U)= \mathcal{O}_{\mathfrak{h}}(U)$.
For each open subset $W\subset |X|=\dfrac{|G|}{|H|}$, the structure sheaf $\mathcal{O}_X$ is defined as following
$$\mathcal{O}_{X}(W):=\mathcal{O}_{inv}(U)\cap \mathcal{O}_{\mathfrak{h}}(U),$$ 
where $U=\pi^{-1}(W).$
One can show that $\mathcal{O}_{X}$ is a sheaf on $|X|$ and the ringed space $X=(|X|,\mathcal{O}_{X})$ is a superdomain locally (For more details ref. to \cite{g-spaces}). So $X$ is a supermanifold and is called homogeneous superspace.
In this section, we want to show that the supergrassmannian $G_{k|l}(m|n)$ is a homogeneous superspace. According to the section 1, it is enough to find a super Lie group which acts on $G_{k|l}(m|n)$ transitively. We show that the super Lie group $GL(m|n)$ acts on supergrassmannian $G_{k|l}(m|n)$ transitively. First, we have to define a morphism 
$a:G_{k|l}(m|n)\times GL(m|n) \rightarrow G_{k|l}(m|n)$. For this, by Yoneda lemma, it is sufficient, for each supermanifold $T$, to define $a_T$: $$a_T:G_{k|l}(m|n)(T)\times GL(m|n)(T) \rightarrow G_{k|l}(m|n)(T).$$
or equivalently define 
$$(a_T)^P: G_{k|l}(m|n)(T) \rightarrow G_{k|l}(m|n)(T).$$
where $P$ is a fixed arbitrary element in $GL(m|n)(T)$. For brevity, we denote $(a_T)^P$ by $\textbf{A}$.
One may consider $GL(m|n)(T)$, as invertible $m|n\times m|n$ super matrix with entries in $\mathcal{O}(T)$, but there is not such a description for $G_{k|l}(m|n)(T)$, because it is not a superdomain. 
We know each supergrassmaanian is constructed by gluing superdomains (c.f. section 2), so one may define the actions of $GL(m|n)$ on superdomains $(|U_{I|R}|,\mathcal{O}(U_{I|R}))$ and then shows that these actions glued to construct $a_T$. 
In addition for defining $\textbf{A}$, it is needed to refine the covering $\{U_{I|R}(T)\}_{I|R}$. Set
$$U_{I|R}^{J|S}(T):=\Big\{X\in U_{I|R}(T)\quad|\quad D_{J|S}\big(\big(M_{J|S}(\tilde{X}_{I|R}P)\big)^{-1}\tilde{X}_{I|R}P\big)\in U_{J|S}(T)\Big\},$$
where, by $\tilde{X}_{I|R}$, we mean a supermatrix that except for columns with indices in $I\cup R$, which together form an identity supermatrix, even and odd blocks are filled from up to down and left to right by $f_a^{I|R}, g_b^{I|R}$, the even and odd free elements of $\cO_{I|R}(T)$. One can show that $\{U_{I|R}^{J|S}(T)\}_{I|R,J|S}$ is a covering for $G_{k|l}(m|n)(T).$ 
Now consider all maps 
\begin{align*} 
\textbf{A}_{I|R}^{J|S}: & U_{I|R}^{J|S}(T)\rightarrow U_{J|S}(T)\\
& \quad X \rightarrow  D_{J|S}\Big(\big(M_{J|S}(\tilde{X}_{I|R}P)\big)^{-1}\tilde{X}_{I|R}P\Big),
\end{align*}
where, $\tilde{X}_{I|R}$ is as above. We have to show that these maps may be glued to construct a global map on $G_{k|l}(m|n)(T)$. For this, it is sufficient to show that
the following diagram commutes:
\begin{Small} \begin{displaymath} \xymatrix{ & U_{I|R}^{J|S}(T)\cap U_{K|Q}^{H|L}(T) \ar[rd]^{(g_{{K|Q},{I|R}})_T}\ar[ld]_{\textbf{A}_{I|R}^{J|S}} & \\ U_{J|S}(T)\cap U_{H|L}(T)\ar[dr]_{(g_{{H|L},{J|S}})_T} &  & U_{I|R}^{J|S}(T)\cap U_{K|Q}^{H|L}(T) \ar[ld]^{\textbf{A}_{K|Q}^{H|L}}\\
& U_{J|S}(T)\cap U_{H|L}(T) & }\quad 
\end{displaymath} \end{Small}
We need the following lemma to show commutativity of the above diagram. Let for each supermanifold $T$, $\big(g_{I|R, J|S}\big)_T$
be the induced map from $g_{I|R, J|S}$ on $T$-points. 
\begin{lemma}
Let $\psi:T\rightarrow \mathbb{R}^{\alpha|\beta}$ be a $T$-point of $\mathbb{R}^{\alpha|\beta}$ and $(z_{tu})$ be a global coordinates of  $\mathbb{R}^{\alpha|\beta}$ with ordering as the one introduced in the second  paragraph at the start of section 2. If $B=(\psi^*(z_{tu}))$ is the supermatrix corresponding to $\psi$, then the supermatrix corresponding to $\big(g_{I|R,J|S}\big)_T(\psi)$ is as follows:
\begin{equation*}
 D_{I|R}((M_{I|R}\tilde{B}_{J|S})^{-1}\tilde{B}_{J|S})
\end{equation*}
where $\tilde{B}_{J|S}$ is as above. (c.f. the second paragraph before this lemma).
\end{lemma}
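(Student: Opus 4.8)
The plan is to reduce everything to the functor-of-points level and to exploit that the gluing morphism $g_{I|R,J|S}$ is \emph{by construction} nothing but the coordinate change~(\ref{transitionmap}); its effect on $T$-points is then obtained by pulling that formula back along $\psi$. First I would invoke Remark~\ref{charttheorem}: a $T$-point $\psi\colon T\to\R^{\alpha|\beta}$ is determined uniquely by the images $\psi^*(z_{tu})\in\cO(T)$ of the global coordinates, with parities respected, and conversely every such tuple defines a $T$-point. Consequently the passage $\psi\mapsto\tilde B_{J|S}$, obtained by placing the entries of $B=(\psi^*(z_{tu}))$ in the non-$(J\cup S)$ columns and the identity in the $J\cup S$ columns, is a faithful matrix encoding of $T$-points relative to the fixed ordering of generators: two $T$-points coincide precisely when their coordinate matrices coincide. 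This turns the assertion into the computation of a single composite, since the induced map is post-composition, $(g_{I|R,J|S})_T(\psi)=g_{I|R,J|S}\circ\psi$.

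Next I would compute the coordinate matrix of this composite. By the defining equation~(\ref{transitionmap}), the transition morphism sends the coordinates of one chart to the entries of a normalized supermatrix of the shape $D_{\bullet}\big((M_{\bullet}(A))^{-1}A\big)$, where the entries of $A$ are the coordinate generators of the chart in which the point is presented. Presenting $\psi$ in the form $\tilde B_{J|S}$, so that the $J\cup S$ columns of $A$ are the identity and its remaining entries are the generators $z_{tu}$, the pullbacks of the target coordinates along $g_{I|R,J|S}\circ\psi$ are obtained by applying $\psi^*$ entrywise to this expression. Since $\psi^*$ is an even, unital homomorphism of $\Z_2$-graded algebras, it is compatible with sums, with supermatrix multiplication, and with the identity matrix, so it may be pushed through the whole expression; as it fixes the constants $0,1$ and carries each generator $z_{tu}$ to the parity-matched entry $\psi^*(z_{tu})$ of $B$, we obtain $\psi^*(A)=\tilde B_{J|S}$ and the composite is represented by $D_{I|R}\big((M_{I|R}\tilde B_{J|S})^{-1}\tilde B_{J|S}\big)$. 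By the faithful encoding of the first step, this is exactly the asserted value of $(g_{I|R,J|S})_T(\psi)$.

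The one step deserving care, and the main (if modest) obstacle, is that~(\ref{transitionmap}) contains a matrix inversion, which is not a polynomial operation, so its commutation with $\psi^*$ is not automatic from the algebra-homomorphism property alone. I would settle this without cofactor formulas: writing $N:=M_{\bullet}(A)$ for the relevant minor, the identities $NN^{-1}=\id=N^{-1}N$ hold over the source sheaf, and applying $\psi^*$ blockwise yields $\psi^*(N)\,\psi^*(N^{-1})=\id=\psi^*(N^{-1})\,\psi^*(N)$; hence $\psi^*(N)$ is invertible and $\psi^*(N^{-1})=\psi^*(N)^{-1}$ with no further hypotheses. That $N$ itself is invertible, so that~(\ref{transitionmap}) applies, is precisely the condition cutting out the domains $U_{I|R}^{J|S}(T)$ on which these transition maps are considered. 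Finally I would observe that the whole argument is natural in $T$, so it is compatible with the Yoneda identification of Lemma~\ref{Yoneda} and plugs directly into the verification that the local maps $\textbf{A}_{I|R}^{J|S}$ glue, which is the use to which the lemma is put.
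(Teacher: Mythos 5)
Your proposal is correct and follows essentially the same route as the paper: identify $(g_{I|R,J|S})_T(\psi)$ with $\psi^*\circ g^*_{I|R,J|S}$ applied to the target coordinates and push $\psi^*$ through the matrix expression $D_{I|R}\big((M_{I|R}A_{J|S})^{-1}A_{J|S}\big)$ entrywise. The one point where you differ is in justifying that $\psi^*$ commutes with the matrix inversion: the paper merely remarks that the entries of the inverse minor are ``rational functions of $z$,'' whereas your argument via applying $\psi^*$ to $NN^{-1}=\id=N^{-1}N$ is a cleaner and fully rigorous version of that same step.
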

\begin{proof}
Note that $g^*_{I|R, J|S}$ may be represented by a supermatrix as follows:
\begin{equation*}
 D_{I|R}((M_{I|R}A_{J|S})^{-1}A_{J|S})
\end{equation*}
Let $M_{I|R}A_{J|S}=(m_{tu})$ and $((M_{I|R}A_{J|S})^{-1}=(m^{tu})$. If $z=(z_{ij})$ be a coordinate system including even and odd coordinates on $U_{I|R}$, then one has
$$g^*_{I|R, J|S}(z_{tu})=\sum m^{tk}(z).z_{ku}$$
Then
\begin{align*}
\psi^*\circ g^*_{I|R, J|S}(z_{tu})=\\
\psi^*\big(\sum m^{tk}(z).z_{ku}\big)=\\
\sum m^{tk}(\psi^*(z)).\psi^*(z_{ku})\\
\end{align*}
For second equality one may note that $\psi^*$ is homomorphism of $Z_2$-graded algebras and $m^{tk}(z)$ is rational function of z. 
Obviously, the last expression is the (t, u)-entry of the matrix $D((M_{I|R}B)^{-1}B)$. This completes the proof.
\end{proof}
\begin{proposition}
The above diagram commutes.
\end{proposition}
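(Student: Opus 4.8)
The plan is to reduce the commutativity of the diagram to a single supermatrix identity and then verify it by a short computation in which the normalization factors cancel telescopically. Chasing the diagram amounts to proving, on the refined overlap $U_{I|R}^{J|S}(T)\cap U_{K|Q}^{H|L}(T)$, the equality
\[
(g_{H|L,J|S})_T\circ \textbf{A}_{I|R}^{J|S}=\textbf{A}_{K|Q}^{H|L}\circ (g_{K|Q,I|R})_T.
\]
Both sides are assembled from two elementary operations on a $k|l\times m|n$ supermatrix: right multiplication by the fixed matrix $P$ (the group action), and renormalization to a chart-canonical form, $\Xi\mapsto D_{\bullet}\big((M_{\bullet}\Xi)^{-1}\Xi\big)$. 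The preceding lemma rewrites each transition map $(g)_T$ in exactly this renormalization form, so the entire diagram collapses to a statement about iterated left multiplications by invertible $k|l\times k|l$ minors.

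First I would fix $X$ in the overlap and represent it by its $I|R$-canonical supermatrix $\tilde{X}_{I|R}$. For the left-hand path, the definition of $\textbf{A}_{I|R}^{J|S}$ produces a $J|S$-point whose canonical matrix is $N^{-1}\tilde{X}_{I|R}P$ with $N:=M_{J|S}(\tilde{X}_{I|R}P)$; indeed $M_{J|S}(N^{-1}\tilde{X}_{I|R}P)=\id$, so passing to $D_{J|S}$ and reconstructing the canonical form returns this matrix unchanged. Applying the lemma to $(g_{H|L,J|S})_T$ then yields
\[
D_{H|L}\big((M_{H|L}(N^{-1}\tilde{X}_{I|R}P))^{-1}\,N^{-1}\tilde{X}_{I|R}P\big).
\]
For the right-hand path, the lemma gives $(g_{K|Q,I|R})_T(X)$ the $K|Q$-canonical matrix $L^{-1}\tilde{X}_{I|R}$ with $L:=M_{K|Q}\tilde{X}_{I|R}$, after which $\textbf{A}_{K|Q}^{H|L}$ produces $D_{H|L}\big((M_{H|L}(L^{-1}\tilde{X}_{I|R}P))^{-1}\,L^{-1}\tilde{X}_{I|R}P\big)$.

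The key step, and the only place where anything actually happens, is the cancellation of the left factors $N$ and $L$. Since selecting the columns indexed by a chart commutes with left multiplication, one has $M_{H|L}(N^{-1}\Xi)=N^{-1}\,M_{H|L}(\Xi)$ for any invertible $k|l\times k|l$ supermatrix $N$; hence $(M_{H|L}(N^{-1}\tilde{X}_{I|R}P))^{-1}N^{-1}=(M_{H|L}(\tilde{X}_{I|R}P))^{-1}$, so the two $N$-factors telescope, and the identical argument disposes of $L$. Both paths therefore collapse to the single expression
\[
D_{H|L}\big((M_{H|L}(\tilde{X}_{I|R}P))^{-1}\,\tilde{X}_{I|R}P\big),
\]
which is manifestly independent of the intermediate charts $I|R$, $J|S$, $K|Q$. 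This is precisely the statement that a point of $G_{k|l}(m|n)$ is insensitive to left multiplication of its representing matrix by $GL(k|l)$, and it forces the diagram to commute.

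I expect the main difficulty to be bookkeeping rather than conceptual. The two points that genuinely need care are that minor-extraction really factors through left multiplication in the $\mathbb{Z}_2$-graded setting — it does, since each column of $N^{-1}\Xi$ is $N^{-1}$ applied to the corresponding column of $\Xi$, and column selection is unaffected by the Koszul signs — and that every minor $M_{J|S}(\tilde{X}_{I|R}P)$, $M_{K|Q}\tilde{X}_{I|R}$ and $M_{H|L}(\tilde{X}_{I|R}P)$ invoked above is invertible on the overlap, which is guaranteed exactly by the definition of the refined sets $U_{I|R}^{J|S}(T)$. With these two observations in hand the computation is purely formal.
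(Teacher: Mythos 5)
Your proposal is correct and follows essentially the same route as the paper: both reduce the diagram to the identity $(g_{H|L,J|S})_T\circ \textbf{A}_{I|R}^{J|S}=\textbf{A}_{K|Q}^{H|L}\circ (g_{K|Q,I|R})_T$, use the fact that $M_{H|L}(N^{-1}\Xi)=N^{-1}M_{H|L}(\Xi)$ for an invertible $k|l\times k|l$ factor $N$, and telescope both composites down to $D_{H|L}\big((M_{H|L}(\tilde{X}_{I|R}P))^{-1}\tilde{X}_{I|R}P\big)$. Your explicit remarks on why the canonical representative is unchanged after renormalization and on the invertibility of the minors on the refined overlaps make precise two points the paper leaves implicit, but the argument is the same.
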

\begin{proof}
Equivalently, for arbitrary $k|l$-indices $I|R, J|S, K|Q, H|L$ we have to show that
\begin{equation}\label{glueaction}
(g_{{H|L},{J|S}})_T\circ \textbf{A}_{I|R}^{J|S}=\textbf{A}_{K|Q}^
{H|L} \circ (g_{{K|Q},{I|R}})_T.
\end{equation}
Let $X\in U_{I|R}^{J|S}(T)\cap U_{K|Q}^{H|L}(T)$ be an arbitrary element. One has $X\in U_{I|R}^{J|S}(T)$, so 
\begin{align*} D_{J|S}\Big(\big(M_{J|S}(\tilde{X}_{I|R}P)\big)^{-1}\tilde{X}_{I|R}P\Big)&\in U_{J|S}(T),\\
(g_{{H|L},{J|S}})_T\Bigg(D_{J|S}\Big(\big(M_{J|S}(\tilde{X}_{I|R}P)\big)^{-1}\tilde{X}_{I|R}P\Big)\Bigg)&\in U_{H|L}(T).
\end{align*}
From left side of (\ref{glueaction}), we have:
\begin{align*}
(g_{{H|L},{J|S}})_T&\circ \textbf{A}_{I|R}^{J|S}(X)\\
&=(g_{{H|L},{J|S}})_T\Bigg(D_{J|S}\Big(\big(M_{J|S}(\tilde{X}_{I|R}P)\big)^{-1}\tilde{X}_{I|R}P\Big)\Bigg)\\
& =D_{H|L}\Bigg(\bigg(M_{H|L}\Big((M_{J|S}(\tilde{X}_{I|R}P))^{-1}\tilde{X}_{I|R}P\Big)\bigg)^{-1}(M_{J|S}(\tilde{X}_{I|R}P))^{-1}\tilde{X}_{I|R}P\Bigg)\\
& =D_{H|L}\Bigg(\Big((M_{J|S}(\tilde{X}_{I|R}P))^{-1}(M_{H|L}(\tilde{X}_{I|R}P))\Big)^{-1}(M_{J|S}(\tilde{X}_{I|R}P))^{-1}\tilde{X}_{I|R}P\Bigg)\\
&
=D_{H|L}\Bigg((M_{H|L}(\tilde{X}_{I|R}P))^{-1}M_{J|S}(\tilde{X}_{I|R}P)(M_{J|S}(\tilde{X}_{I|R}P))^{-1}\tilde{X}_{I|R}P\Bigg)\\
&
=D_{H|L}\Bigg((M_{H|L}(\tilde{X}_{I|R}P))^{-1}\tilde{X}_{I|R}P\Bigg).
\end{align*}‎
Also, since $X\in U_{I|R}(T)$, so
\begin{equation*}
(g_{{K|Q},{I|R}})_T(X)\in U_{K|Q}(T)\quad ,\qquad \textbf{A}_{K|Q}^{H|L}\bigg((g_{{K|Q},{I|R}})_T(X)\bigg)\in U_{H|L}(T)
\end{equation*}
Then from right side of equation (\ref{glueaction}), we have
\begin{align*}
\textbf{A}_{K|Q}^{H|L}&\circ(g_{{K|Q},{I|R}})_T(X)\\
&=\textbf{A}_{K|Q}^{H|L}\Bigg(D_{K|Q}\bigg((M_{K|Q}\tilde{X}_{I|R})^{-1}\tilde{X}_{I|R}\bigg)\Bigg)\\
&  =D_{H|L}\Bigg(\Big[M_{H|L}\bigg((M_{K|Q}\tilde{X}_{I|R})^{-1}\tilde{X}_{I|R}P\bigg)\Big]^{-1}(M_{K|Q}\tilde{X}_{I|R})^{-1}\tilde{X}_{I|R}P\Bigg)\\
&
=D_{H|L}\Bigg(\Big[(M_{K|Q}\tilde{X}_{I|R})^{-1}M_{H|L}(\tilde{X}_{I|R}P)\Big]^{-1}(M_{K|Q}\tilde{X}_{I|R})^{-1}\tilde{X}_{I|R}P\Bigg)\\
&
=D_{H|L}\Bigg(\Big(M_{H|L}(\tilde{X}_{I|R}P)\Big)^{-1}(M_{K|Q}\tilde{X}_{I|R})(M_{K|Q}\tilde{X}_{I|R})^{-1}\tilde{X}_{I|R}P\Bigg)\\
&
=D_{H|L}\Bigg(\Big(M_{H|L}(\tilde{X}_{I|R}P)\Big)^{-1}\tilde{X}_{I|R}P\Bigg).
\end{align*}
This shows that the above diagram commutes.
\end{proof}
Therefore $GL(m|n)$ acts on $G_{k|l}(m|n)$ with action $a$.
Now it is needed to show that this action is transitive. 
\begin{theorem}
$GL(m|n)$ acts on $G_{k|l}(m|n)$ with above action, transitively.
\end{theorem}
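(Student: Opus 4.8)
The plan is to reduce transitivity to a surjectivity statement on $\mathbb{R}^{0|q}$-points via Proposition~\ref{Transitive}, and then to establish that surjectivity by an explicit matrix completion. Since the odd dimension of $GL(m|n)$ is $q=2mn$, Proposition~\ref{Transitive} says that $a$ is transitive if and only if the map $(a_p)_{\mathbb{R}^{0|q}}\colon GL(m|n)(\mathbb{R}^{0|q})\to G_{k|l}(m|n)(\mathbb{R}^{0|q})$ is surjective for a single conveniently chosen base point $p\in|G_{k|l}(m|n)|$. I would take $p$ to be the origin of the chart $U_{I_0|R_0}$ with $I_0=\{1,\dots,k\}$ and $R_0=\{m+1,\dots,m+l\}$, i.e.\ the point at which all free generators $x_a^{I_0},e_b^{I_0}$ vanish. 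Writing $T=\mathbb{R}^{0|q}$ throughout, recall that $(a_p)_T$ sends $P\in GL(m|n)(T)$ to $\hat{p}\cdot P$.

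First I would compute $(a_p)_T(P)$ concretely. The supermatrix $\tilde{p}_{I_0|R_0}$ attached to $\hat{p}$ is the standard $k|l\times m|n$ matrix whose columns indexed by $I_0\cup R_0$ form the identity and whose remaining entries are zero; hence it acts as a selection matrix, and the product $\tilde{p}_{I_0|R_0}\,P$ is exactly the submatrix of $P$ consisting of the rows indexed by $I_0\cup R_0$. By the definition of $\textbf{A}_{I_0|R_0}^{J|S}$, the $T$-point $(a_p)_T(P)$ is then represented, in whichever chart $U_{J|S}$ makes the minor $M_{J|S}(\tilde{p}_{I_0|R_0}P)$ invertible, by $D_{J|S}\big((M_{J|S}(\tilde{p}_{I_0|R_0}P))^{-1}\tilde{p}_{I_0|R_0}P\big)$. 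In words, $(a_p)_T(P)$ is the supergrassmannian point whose row space is spanned by the rows of $P$ indexed by $I_0\cup R_0$.

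It then remains to show that this assignment is surjective. Given an arbitrary $T$-point $\xi$ of $G_{k|l}(m|n)$, it lies in some chart $U_{I|R}(T)$ and is represented there by the normalized matrix $\tilde{\xi}_{I|R}$, whose columns indexed by $I\cup R$ form the identity. I would build $P\in GL(m|n)(T)$ by placing $\tilde{\xi}_{I|R}$ into the rows of $P$ indexed by $I_0\cup R_0$, and by filling the remaining $(m-k)|(n-l)$ rows (those indexed by the complement of $I_0\cup R_0$) with the standard basis rows $e_j$ for $j$ ranging over the complement of $I\cup R$. For this $P$ the rows indexed by $I_0\cup R_0$ reproduce $\tilde{\xi}_{I|R}$, so normalizing in the chart $U_{I|R}$ returns $\tilde{\xi}_{I|R}$ itself and gives $(a_p)_T(P)=\xi$.

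The main obstacle is checking that the matrix $P$ so constructed is genuinely an element of $GL(m|n)(T)$, that is, an invertible even supermatrix over $\cO(T)=\wedge\mathbb{R}^{q}$. Invertibility of an even supermatrix over such a supercommutative ring is controlled by the invertibility of its reduced even part, so I would verify that the body of $P$ is invertible: after reordering rows and columns so that the indices $I_0\cup R_0$ and $I\cup R$ come first, the completion above makes the body block upper triangular with identity diagonal blocks in each of the even and odd sectors, hence invertible. This is precisely where the reduction to $\mathbb{R}^{0|q}$-points provided by Proposition~\ref{Transitive} pays off, since the identity minor forces full rank and no submersion computation is needed. A final point to keep in mind is that the chart bookkeeping of $a$—choosing $J|S$ and applying $D_{J|S}((M_{J|S}\cdots)^{-1}\cdots)$—must be compatible with normalizing in the chart $U_{I|R}$; but this is exactly the gluing already established in the preceding proposition, so it requires no further argument. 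With $(a_p)_{\mathbb{R}^{0|q}}$ surjective, Proposition~\ref{Transitive} yields transitivity of $a$.
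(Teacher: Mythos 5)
Your proposal is correct, and while it follows the paper's overall strategy --- reduce to surjectivity of $(a_p)_{\mathbb{R}^{0|q}}$ via Proposition~\ref{Transitive} and then exhibit a preimage by explicit matrix construction --- the construction itself is genuinely different. The paper works from an arbitrary base point $X\in|U_{I|R}|$, writes the target $W=\left(\begin{smallmatrix}A&B\\C&D\end{smallmatrix}\right)$, and assembles $V$ block by block: the even blocks come from the classical transitivity of $GL(m)$ on $G_k(m)$ and of $GL(n)$ on $G_l(n)$ (giving $\bar X_1P=A$, $\bar X_2Q=D$), and the odd blocks from choosing solutions of the underdetermined systems $\bar X_1Z=B$, $\bar X_2Z'=C$. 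You instead fix the most convenient base point (the origin of the standard chart $U_{I_0|R_0}$), observe that $\tilde p_{I_0|R_0}P$ is just the row-selection of $P$ along $I_0\cup R_0$, and build $P$ by inserting the normalized representative $\tilde\xi_{I|R}$ of the target into those rows and completing with the standard basis rows indexed by the complement of $I\cup R$. Your route buys two things the paper does not deliver: the invertibility of the constructed group element is actually verified (via block-triangularity of the body after a parity-preserving permutation), whereas the paper never checks that its $V$ lies in $GL(m|n)(\mathbb{R}^{0|q})$; and your argument works uniformly over $\cO(\mathbb{R}^{0|q})=\wedge\mathbb{R}^{q}$, whereas the paper treats the blocks $A,D$ as if they were real matrices when invoking classical transitivity. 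Two small points you should make explicit: the $e_j$ with $j\le m$ must go into the empty even rows and the $e_j$ with $j>m$ into the empty odd rows (otherwise the parity of the blocks of $P$ is violated --- the counts $m-k$ and $n-l$ match, so this is possible); and the fact that every $\mathbb{R}^{0|q}$-point of $G_{k|l}(m|n)$ factors through a single chart $U_{I|R}$ uses that the underlying space of $\mathbb{R}^{0|q}$ is a single point, so this step does not generalize to arbitrary $T$ --- which is fine here, but worth flagging.
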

\begin{proof}
By proposition \ref{Transitive}, it is sufficient to show that the map $$(a_X)_{\mathbb{R}^{0|q}}:GL(m|n)(\mathbb{R}^{0|q}) \rightarrow G_{k|l}(m|n)(\mathbb{R}^{0|q})$$ is surjective,
where $q=2mn$ is odd dimension of $GL(m|n)$ and $$X=(X_1,X_2)\in|U_{I|R}|= U_I\times U_R$$ is an element and $X_1$ is the matrix of coordinates of a $k$-dimensional subspace, say $A_1$ in $U_I$, and also $X_2$ is the matrix of coordinates of a $l$-dimensional subspace, say $A_2$ in $U_R$. As an element of $G_{k|l}(m|n)(T)$, one may use (\ref{phat}) and represent $\hat{X}$, as: 
$$\hat{X}=\begin{pmatrix}\bar{X}_1 & 0\\0 & \bar{X}_2\end{pmatrix}$$
where $T$ is an arbitrary supermanifold and $\bar{X}_1$ and $\bar{X}_2$ are unique representations of $A_1$ and $A_2$ in $U_I$ and $U_R$ respectively.
For surjectivity, let $$W=\begin{pmatrix}
A & B\\ C & D \end{pmatrix}\in U_{J|S}(\mathbb{R}^{0|q}),$$
 be an arbitrary element.
We have to show that there exists an element $V\in GL(m|n)(\mathbb{R}^{0|q})$ such that $\hat{X}V=W$.
Since $\bar{X}_1\in G_k(m)$ and $A$ is a matrix with rank $k$ and also given that the Lie group $GL(m)$ acts on manifold $G_k(m)$ transitively, then there exists an invertible matrix $P_{m\times m}\in GL(m)$ such that $\bar{X}_1P=A$. Similarly one may see that there exists an invertible matrix $Q_{n\times n}\in GL(n)$ that $\bar{X}_2Q=D$. In addition, the equations $\bar{X}_1Z=B$ and $\bar{X}_2Z^\prime=C$ have infinite solutions. Choose arbitrary solutions say $H_{m\times n}$ and $N_{n\times m}$ for these equations. Clearly, One can see $V=\left[\begin{array}{c|c}P_{m\times m} & H_{m\times n}\\
\hline N_{n\times m} & Q_{n\times n} \end{array}
\right]_{m|n\times m|n}$ satisfy in  the equation $\hat{X}V=W$. So $(A_X)_{\mathbb{R}^{0|q}}$ is surjective.
By Proposition \ref{Transitive}, this shows that the action of $GL(m|n)$ on $G_{k|l}(m|n)$ is transitive.
\end{proof}
Then according to Proposition \ref{equivariant}, $G_{k|l}(m|n)$ is a homogeneous superspace.
\section{Conclusion}
In this paper, we show that $GL(m|n)$ acts transitively on $G_{k|l}(m|n)$. Thus this supermanifold is a homogeneous superspace. This paper is part of our work on $\nu$-grassmannians, a novel generalization of grassmannian in supergeometry, and super Lie groups which act on them. The most important feature of these spaces is that they carry an odd involution on their sheaf structures. In forthcoming papers, the second author (with his PhD students) show that by using $\nu$-grassmannians, one may generalize the theorem of homotopy classification of vector bundles and the concept of universal Chern classes in supergeometry. To see these results in special cases for $\nu$-projective spaces refer to \cite{vclass} and \cite{vprojective}.

\newpage
\providecommand{\bysame}{\leavevmode\hbox to3em{\hrulefill}\thinspace}
\providecommand{\MR}{\relax\ifhmode\unskip\space\fi MR }

\providecommand{\href}[2]{#2}

\end{document}